\newcommand{\cH}{\ensuremath{\mathcal{H}}}
\newcommand{\cL}{\ensuremath{\mathcal{L}}}
\newcommand{\cV}{\ensuremath{\mathcal{V}}}
\newcommand{\cK}{\ensuremath{\mathcal{K}}}
\newcommand{\fg}{\ensuremath{\mathfrak{g}}}
\newcommand{\fk}{\ensuremath{\mathfrak{k}}}
\newcommand{\fK}{\ensuremath{\mathfrak{K}}}
\newcommand{\gau}{\ensuremath{\mathfrak{gau}}}
\newcommand{\Ad}{\ensuremath{\operatorname{Ad}}}
\newcommand{\U}{\ensuremath{\operatorname{U}}}
\newcommand{\ad}{\ensuremath{\operatorname{ad}}}
\newcommand{\Gau}{\ensuremath{\operatorname{Gau}}}
\newcommand{\Aut}{\ensuremath{\operatorname{Aut}}}
\newcommand{\R}{\ensuremath{\mathbb{R}}}
\newcommand{\C}{\ensuremath{\mathbb{C}}}
\newcommand{\K}{\ensuremath{\mathbb{K}}}
\newcommand{\Z}{\ensuremath{\mathbb{Z}}}
\newcommand{\T}{\ensuremath{\mathbb{T}}}
\newcommand{\bv}{{\bf{v}}}
\newcommand{\bV}{\ensuremath{\mathbb{V}}}
\newcommand{\pr}{\ensuremath{\operatorname{pr}}}
\newcommand{\Diff}{\mathop{{\rm Diff}}\nolimits}
\newcommand{\g}{{\mathfrak g}}
\newcommand{\PU}{\mathop{\rm PU{}}\nolimits}
\newcommand{\OO}{\mathop{\rm O{}}\nolimits}
\newcommand{\Conj}{\mathop{\rm Conj}\nolimits}
\newcommand{\der}{\mathop{\rm der}\nolimits}
\newcommand{\Spec}{\mathop{\rm Spec}\nolimits}
\theoremstyle{plain}
\newtheorem{Theorem}{Theorem}[section]
\newtheorem{Proposition}[Theorem]{Proposition}
\theoremstyle{definition}
\newtheorem{Definition}[Theorem]{Definition}
\newtheorem{Remark}[Theorem]{Remark}
\newtheorem{Example}[Theorem]{Example}
\renewcommand{\:}{\colon}
\newcommand{\1}{\mathbf{1}}
\newcommand{\res}{\vert}
\renewcommand{\hat}{\widehat} 
\renewcommand{\phi}{\varphi}
\newcommand{\dd}{{\tt d}}
\newcommand\oline{\overline}
\begin{document}

\title{Covariant central extensions of gauge Lie algebras}
\author{Bas Janssens\footnote{
B.J.\ acknowledges support from the NWO grant 
613.001.214 ``Generalised Lie algebra sheaves".} {} and  
Karl-Hermann Neeb\footnote{K.-H.~Neeb acknowledges support from 
the Centre Interfacultaire Bernoulli (CIB) 
and the  NSF (National Science Foundation) 
for a research visit at the EPFL.} 
}

\maketitle

\begin{abstract}
Motivated by positive energy representations,
we classify those continuous central extensions of the 
compactly supported
gauge Lie algebra that are covariant under a 1-parameter group of transformations
of the base manifold. 
\end{abstract}


\section{Introduction} 
\label{sec:1}

Let $\pi \colon \cK \rightarrow M$ be a locally trivial bundle of 
finite dimensional Lie groups, with 
corresponding Lie algebra bundle $\fK \rightarrow M$.
We assume that the fibres $\fK_{x}$ are semisimple.
The group $G = \Gamma_{c}(\cK)$ of compactly supported sections, called the 
(compactly supported) \emph{gauge group}, is a
locally convex Lie group with Lie algebra $\fg = \Gamma_{c}(\fK)$, the 
(compactly supported) \emph{gauge Lie algebra}.

In representation theory, one often wishes to impose \emph{positive energy conditions}
derived from a distinguished 1-parameter group 
$\gamma_{M} \colon \R \rightarrow \mathrm{Diff}(M)$ of transformations 
of the base. 
A lift $\gamma \colon \R \rightarrow \mathrm{Aut}(\cK)$ of $\gamma_{M}$
induces a 1-parameter family $\alpha \colon \R \rightarrow \Aut(G)$
of automorphisms of the gauge group.
If $D \in \mathrm{der}(\fg)$ is the derivation 
$D(\xi) := \frac{d}{dt}\big|_{t=0} \alpha_{t*}(\xi)$ induced by $\alpha$, then
the semidirect product 
\[
G \rtimes_{\alpha} \R 
\]
is a locally convex Lie group with Lie algebra
\[
\fg \rtimes_{D} \R.
\]
Since $[0 \oplus 1, \xi \oplus 0] = D(\xi)$, we will identify $0\oplus 1$ with 
$D$ and write \break  ${\fg \rtimes_{D} \R} = \fg \rtimes \R D$ accordingly.
In this note, we give a complete classification of the continuous
1-dimensional central extensions 
$\widehat{\fg}$ of $\g \rtimes_D\R$, in other words, 
we determine the continuous second Lie algebra cohomology 
$H^2(\fg \rtimes_D \R, \R)$.

In order to describe the answer, 
write $\bv \in \cV(\cK)$ for the vector field on 
$\cK$ that generates the flow of $\gamma$, and write 
$\pi_*\bv \in \cV(M)$ for its projection to $M$, which generates the flow of 
$\gamma_{M}$. 
Identifying $\xi \in \Gamma_c(\fK)$ with the corresponding 
vertical left invariant vector field $\Xi_\xi$ on $\cK$, 
the action of the derivation $D$ on $\g = \Gamma_{c}(\fK)$ is 
described by $D\xi = L_\bv \xi$. 
For each fibre $\fK_{x}$, 
the universal invariant bilinear form $\kappa$ takes 
values in the $K$-representation $V(\fK_{x})$, 
and $\bV := V(\fK)$ is a flat bundle over $M$. 
In the (important!) special case that $\fK_{x}$ is a compact simple Lie algebra,
$\kappa$ is simply the Killing form with values in $V(\fK_{x}) = \R$,
and $\bV$ is the trivial real line bundle over $M$.
Given a Lie connection $\nabla$ on $\fK$ and a 
closed $\pi_*\bv$-invariant current $\lambda \in \Omega^1_{c}(M,\bV)'$, 
there is a unique $2$-cocycle $\omega_{\lambda,\nabla}$ on $\g \rtimes \R D$ 
with 
\[ \omega_{\lambda,\nabla}(\xi,\eta) = \lambda(\kappa(\xi,\nabla\eta)), \qquad 
\omega_{\lambda,\nabla}(D,\xi) = \lambda(\kappa(L_{\bv}\nabla,\xi)) 
\quad \mbox{ for } \quad \xi, \eta \in \g.\] 
The class $[\omega_{\lambda,\nabla}] \in H^2(\g \rtimes_D \R,\R)$ 
is independent of the choice of $\nabla$.
One of our main results (Theorem~\ref{EqIjkcykel}) 
asserts that the map $\lambda \mapsto [\omega_{\lambda,\nabla}]$
is a linear isomorphism 
from the space of closed, $\pi_*\bv$-invariant, $\bV$-valued currents on $M$ to the 
continuous Lie algebra cohomology 
$H^2(\g \rtimes_D \R,\R)$.

Our motivation for classifying these central extensions
comes from the theory of \emph{projective 
positive energy representations}.
If $G$ is a Lie group with locally convex Lie algebra $\fg$,
and $\alpha \: \R \to \Aut(G)$ is a homomorphism 
defining a smooth $\R$-action on $G$, then the 
semidirect product $G \rtimes_{\alpha}\R$ is again a Lie group,
with Lie algebra $\g \rtimes \R D$.
For every smooth 
projective unitary representation \break $\oline\rho \: G \rtimes_{\alpha}\R \to \PU(\cH)$ 
of $G \rtimes_{\alpha}\R$, there exists a
central Lie group extension $\widehat{G}$ of $G \rtimes_{\alpha}\R$ by 
the circle group $\T$ 
for which $\overline{\rho}$ lifts to a  smooth \emph{linear} unitary 
representation $\rho \colon \widehat{G} \rightarrow \U(\cH)$
 (see \cite{JN15} for details). 
The Lie algebra $\hat\g$ can then 
 be written as 
\begin{equation}
  \label{eq:d-elt}
\hat\g =  \R C \oplus_\omega (\g \rtimes  \R D), 
\end{equation}
where $\omega$ is a Lie algebra 2-cocycle of $\g \rtimes  \R D$.
The Lie bracket is 
\[ [z C + x + tD, z'C + x' + t' D] 
= \omega(x + tD,x'+t'D)C + [x,x'] + tD(x')- t'D(x)\,,\]  
and $\dd\rho(C) = i \1$ by construction. 
We say that $\oline\rho$ is a {\it positive energy representation} if 
the selfadjoint operator $H := i\dd\rho(D)$ has a spectrum which is bounded below. 

In \cite{JN16} we address the problem of classifying the projective positive 
energy representations of the gauge group $G = \Gamma_c(\cK)$,
for the smooth action \break  
$\alpha \: \R \to \Aut(G)$ induced by a smooth 1-parameter group 
$\gamma \: \R \to \Aut(\cK)$ of bundle automorphisms.
We break this problem into the following steps: 

\begin{itemize}
\item[\rm(PE1)] Classify the $1$-dimensional central Lie algebra extensions 
$\widehat{\fg}$ of $\fg \rtimes_{D} \R$.
\item[\rm(PE2)] Determine which central extensions $\widehat{\fg}$ 
fulfill natural positivity conditions imposed by so-called 
Cauchy--Schwarz estimates required for cocycles coming from 
positive energy representations (cf.~\cite{JN16}). 
\item[\rm(PE3)] For those $\widehat{\fg}$, classify the positive energy representations that 
integrate to a representation of a connected Lie group
$\widehat{G}_0$ with Lie algebra $\hat\g$. 
\end{itemize} 

In the present note we completely solve (PE1) for semisimple structure algebras $\fK_{x}$,
thus completing the first step in the classification of projective positive energy representations.
%
%
%

To proceed with (PE2), we assume in \cite{JN16} 
that the vector field $\pi_*\bv$ on $M$ has no 
zeros and generates a periodic flow, hence defines an 
action of the circle group~$\T$ on $M$. Under this assumption 
we then show that for every projective positive energy 
representation $\overline{\rho}$ of $\fg \rtimes \R D$, there exists a 
locally finite set $\Lambda \subseteq M/\T$ of orbits
such that the $\fg$-part of $\dd\overline{\rho}$ 
factors through the restriction homomorphism
\begin{equation}
  \label{eq:rest}
\g = \Gamma_c(\fK) \to \Gamma_c(\fK\res_{\Lambda_M}) \cong  
\bigoplus_{\lambda \in \Lambda} \cL_{\psi_\lambda}(\fk),
\end{equation}
where $\Lambda_{M} \subseteq M$ is the union of the orbits in $M$, and
\[ \cL_\psi(\fk) = \{ \xi \in C^\infty(\R,\fk) \: (\forall t \in \R)\ 
\xi(t+1) = \psi^{-1}(\xi(t))\} \] 
is the loop algebra twisted by a finite order automorphism $\psi \in \Aut(\fk)$. 
As the positive energy representations of covariant loop algebras and 
their central extensions, the Kac--Moody algebras (\cite{Ka85}), 
are well understood (\cite{PS86}), this allows us to solve~(PE3). 
This result contributes in particular to ``non-commutative distribution'' 
program whose goal is a classification of the irreducible unitary representations 
of gauge groups (\cite{A-T93}).

The structure of this paper is as follows.  
After introducing gauge groups, their Lie algebras and one-parameter groups 
of automorphism in Section~\ref{SectionGGA}, we describe in 
Section~\ref{vanseminaarsimpel} a procedure that provides a 
reduction from semisimple to simple structure Lie algebras, 
at the expense of replacing $M$ by a finite covering manifold~$\hat M$. 
In Section~\ref{sec:3.3}, we introduce the flat bundle $\bV$, which is 
used in a crucial way in Section~\ref{GySsCoc} 
for the description of the natural $2$-cocycles on the gauge algebra. 
The first step (PE1) is completely settled in 
Section~\ref{GySsCoc}, where Theorem~\ref{EqIjkcykel}
describes all $1$-dimensional central extensions of the gauge algebra. 

\tableofcontents

\section{Gauge groups and gauge algebras }
\label{SectionGGA}

Let $\cK \rightarrow M$ be a smooth bundle of Lie groups, 
and let  
$\fK \rightarrow M$ be the associated Lie algebra bundle
with fibres $\fK_{x} = \mathrm{Lie}(\cK_{x})$.
If $M$ is connected, then the fibres $\cK_{x}$ of $\cK\rightarrow M$
are all isomorphic to a fixed structure group $K$,
and the fibres $\fK_{x}$ of $\fK$ are isomorphic to its Lie algebra $\fk = \mathrm{Lie}(K)$.

\begin{Definition}\label{def:gaugegroup} (Gauge group) 
The \emph{gauge group} is the group $\Gamma(\cK)$ of smooth sections 
of $\cK \rightarrow M$, and
the \emph{compactly supported gauge group} 
is the group $\Gamma_{c}(\cK)$ of smooth compactly supported sections.
\end{Definition}

\begin{Definition}(Gauge algebra)
The \emph{gauge algebra} is the Fr\'echet-Lie algebra $\Gamma(\fK)$
of smooth sections of $\fK \rightarrow M$, equipped with the pointwise Lie bracket.
The \emph{compactly supported gauge algebra} $\Gamma_{c}(\fK)$
is the LF-Lie algebra of smooth compactly supported sections. 
\end{Definition}

The compactly supported gauge group $\Gamma_{c}(\cK)$ is a locally convex Lie group,
whose Lie algebra is the compactly supported gauge algebra $\Gamma_{c}(\fK)$.

\begin{Proposition}
There exists a unique smooth structure on $\Gamma_{c}(\cK)$
which makes it a 
locally exponential
Lie group with Lie algebra $\Gamma_{c}(\fK)$
and exponential map $\exp \colon \Gamma_{c}(\fK) \rightarrow \Gamma_{c}(\cK)$ 
defined by pointwise exponentiation.
\end{Proposition}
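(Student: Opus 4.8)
The plan is to endow $\Gamma_c(\cK)$ with an atlas whose chart at the identity section is pointwise exponentiation, and then to show that the group operations, expressed in these charts, are smooth maps of locally convex spaces. Since the structure group $K$ is finite dimensional it is locally exponential and carries a local Baker--Campbell--Hausdorff multiplication, so the strategy is to transport this fibrewise structure to the level of sections. Local triviality of $\cK \to M$ is what guarantees that $\Gamma_c(\fK)$ is an LF-space and that the fibrewise data below can be chosen uniformly; all of the constructions are natural with respect to the fibrewise exponential, so no delicate patching beyond this is needed. Uniqueness will then follow from the rigidity of locally exponential structures: once the identity chart and the local multiplication are prescribed, the smooth structure is forced.

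First I would set up the exponential chart. The fibrewise exponentials $\exp_{\cK_x} \colon \fK_x \to \cK_x$ assemble to a smooth bundle morphism $\exp_{\cK} \colon \fK \to \cK$ covering $\id_M$. As each $\exp_{\cK_x}$ is a local diffeomorphism at $0$ and $K$ is finite dimensional, one may choose an open neighbourhood $\Omega \subseteq \fK$ of the zero section on which $\exp_{\cK}$ restricts to a fibrewise diffeomorphism onto an open neighbourhood of the unit section in $\cK$, and on which the fibrewise Baker--Campbell--Hausdorff product is defined and smooth. The set
\[
\wh U := \{\xi \in \Gamma_c(\fK) : \xi(M) \subseteq \Omega\}
\]
is open in $\Gamma_c(\fK)$, and $\phi \colon \wh U \to \Gamma_c(\cK)$, $\phi(\xi) := \exp_{\cK} \circ \, \xi$, is a bijection onto the set of compactly supported sections with values in $\exp_{\cK}(\Omega)$. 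This $\phi$ is the candidate identity chart, and pointwise exponentiation is by construction its associated exponential map.

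The heart of the argument is the smoothness of the group operations in these charts. Writing $\xi * \eta$ for the section obtained by applying the fibrewise Baker--Campbell--Hausdorff product to $(\xi,\eta)$, one has $\phi(\xi)\phi(\eta) = \phi(\xi * \eta)$ wherever the right-hand side is defined, while inversion becomes the linear map $\xi \mapsto -\xi$ since $\phi(\xi)^{-1} = \phi(-\xi)$. Thus it suffices to show that the \emph{superposition operator} $(\xi,\eta) \mapsto \xi * \eta$ induced by the smooth fibrewise product is smooth on the relevant open subset of $\Gamma_c(\fK) \times \Gamma_c(\fK)$ into $\Gamma_c(\fK)$. This is the main obstacle, and it is precisely where the LF-topology on compactly supported sections must be handled with care: one has to control supports under the pushforward and invoke the smoothness of superposition (Nemytskii) operators associated to fibre-preserving smooth bundle maps, a standard but nontrivial input from the theory of manifolds of mappings. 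Granting this, the convergence of the Baker--Campbell--Hausdorff series on $\Omega$ shows that $\phi$ and its translates form a smooth atlas turning $\Gamma_c(\cK)$ into a locally exponential Lie group with Lie algebra $\Gamma_c(\fK)$.

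Finally, for uniqueness I would argue that any smooth structure making $\Gamma_c(\cK)$ a locally exponential Lie group with the prescribed Lie algebra and pointwise exponential map must have $\exp$ as a local diffeomorphism at the identity, hence must agree with $\phi$ on a neighbourhood of the unit section; left translations then propagate this agreement, so the two atlases coincide everywhere. The locally exponential property is immediate, since the identity chart \emph{is} the exponential and the local multiplication is the convergent Baker--Campbell--Hausdorff product.
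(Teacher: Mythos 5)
Your construction of the identity chart and your treatment of the local operations match the paper's own proof in substance: the paper likewise takes pointwise exponentiation over a suitable open set of sections (defined via a locally finite trivialising cover with compact refinement) as the chart at the identity, and smoothness of the local multiplication and inversion is exactly the superposition-operator point you isolate. The gap lies in your globalization step, the sentence ``$\phi$ and its translates form a smooth atlas turning $\Gamma_c(\cK)$ into a locally exponential Lie group.'' Left translates of $\phi$ do yield a smooth manifold structure (the chart transitions only involve the local BCH product), but this alone does not make the \emph{group multiplication} globally smooth. In the charts $L_g\phi$, $L_h\phi$, $L_{gh}\phi$, multiplication reads
\[
(\xi,\eta) \longmapsto \phi^{-1}\bigl(c_{h^{-1}}(\phi(\xi))\,\phi(\eta)\bigr),
\]
so smoothness of multiplication at an arbitrary point $(g,h)$ requires that conjugation $c_{h^{-1}}$ by a group element possibly far from the identity be smooth near the identity. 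This is a genuine extra hypothesis, not a formal consequence of the local data: a group can carry a local Lie group structure at the identity and yet admit no compatible global Lie group structure, precisely because some conjugation fails to be smooth (consider $\R^2 \rtimes_\psi \Z$ with $\psi$ a discontinuous $\Q$-linear automorphism of $\R^2$).

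The paper handles this by verifying that for every $\sigma \in \Gamma_c(\cK)$ there is an open $0$-neighbourhood $W_\sigma$ on which $\mathrm{Ad}_\sigma$ is smooth, and then invoking the local-to-global theorem of Tits \cite{Ti83}, in the locally convex version \cite[Thm.~II.2.1]{Ne06}, which has exactly this conjugation condition among its hypotheses and delivers both existence and uniqueness of the Lie group structure. To repair your argument you should add this verification (either to feed into that theorem, or to make multiplication smooth in your translated atlas directly). It is not difficult in the present setting: conjugation by $\sigma$ is again a fibre-preserving smooth bundle map $\cK \to \cK$ (pointwise conjugation by $\sigma(x)$), so $\mathrm{Ad}_\sigma$ acting on sections is a superposition operator of the same kind you already use for the BCH product; but it must be stated and checked, since it quantifies over all of $\Gamma_c(\cK)$ and not just over a neighbourhood of the identity. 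Your uniqueness argument (the exponential chart is forced, and left translations propagate the agreement) is fine, and is essentially the uniqueness clause of that same theorem.
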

\begin{proof}
It suffices to prove this in the case that $M$ is connected.
Let $V_{\fk}, W_{\fk} \subseteq \fk$ be open, symmetric 0-neighbourhoods such that 
the exponential
$\exp \colon \fk \rightarrow K$ restricts to a diffeomorphism of
$W_{\fk}$ onto its image, $V_{\fk}$ is contained in $W_{\fk}$, and  
$\exp(V_{\fk}) \cdot \exp(V_{\fk}) \subseteq \exp(W_{\fk})$.

Choose a locally finite cover $(U_{i})_{i\in I}$ of $M$
by open trivialising neighbourhoods for $\cK \rightarrow M$, which possesses
a refinement $(C_{i})_{i\in I}$ such that $C_{i} \subset U_{i}$ is compact for all $i\in I$.
Fix local trivialisations
$\phi_{i} \colon {K \times U_{i}} \stackrel{\sim}{\rightarrow}\cK|_{U_{i}}$ of $\cK$, which 
gives
rise to local trivialisations 
$\dd\phi_{i} \colon {\fk \times U_{i}} \stackrel{\sim}{\rightarrow}\fK|_{U_{i}}$
for $\fK$. Define 
$W_{i} := \dd\phi_{i}(U_{i} \times W_{K})$, and set 
\[
W_{\Gamma_{c}(\fK)} := \{\xi \in \Gamma_{c}(\fK)\,;\, \xi(C_{i}) \subseteq W_{i}\;\forall\, i \in I\}\,.
\]
Similarly, $V_{\Gamma_{c}(\fK)}$ is defined in terms of preimages over $C_i$ of 
$V_{i} := \dd\phi_{i}(U_{i} \times V_{K})$, and both 
$V_{\Gamma_{c}(\fK)}$ and $W_{\Gamma_{c}(\fK)}$ are open in $\Gamma_{c}(\fK)$.
Since the pointwise exponential $\exp \colon \Gamma_{c}(\fK) \rightarrow \Gamma_{c}(\cK)$
is a bijection of $W_{\Gamma_{c}(\fK)}$ onto its image
$W_{\Gamma_{c}(\cK)} := \exp(W_{\Gamma_{c}(\fK)})$, the latter 
inherits a smooth structure.
The same goes for its subset $V_{\Gamma_{c}(\cK)} := \exp(V_{\Gamma_{c}(\fK)})$.

Inversion $W_{\Gamma_{c}(\cK)} \rightarrow W_{\Gamma_{c}(\cK)}$ and multiplication
$V_{\Gamma_{c}(\cK)} \times V_{\Gamma_{c}(\cK)} \rightarrow W_{\Gamma_{c}(\cK)}$ are smooth,
and for every $\sigma \in \Gamma_{c}(\cK)$, there exists an open 
0-neighbourhood $W_{\sigma} \subseteq W_{\Gamma_{c}(\fK)}$ such that 
$\mathrm{Ad}_{\sigma} \colon W_{\sigma} \rightarrow W_{\Gamma_{c}}(\fK)$ is smooth.
It therefore follows from \cite[p.14]{Ti83} 
(which generalises to locally convex Lie groups, cf.~\cite[Thm.~II.2.1]{Ne06}),
that $\Gamma_{c}(\cK)$ possesses 
a unique Lie group structure such that for some open 0-neighbourhood 
$U_{\Gamma_{c}(\fK)} \subseteq W_{\Gamma_{c}(\fK)}$,
the image $\exp(U_{\Gamma_{c}(\fK)}) \subseteq \Gamma_{c}(\cK)$ is an open neighbourhood of 
the identity.
\end{proof}

\begin{Example}
If $\cK \rightarrow M$ is a trivial bundle, then the gauge group is
$\Gamma(\cK) = C^{\infty}(M,K)$, and the gauge algebra is 
$\Gamma(\fK) = C^{\infty}(M,\fk)$.
Similarly, we have $\Gamma_{c}(\cK) = C^{\infty}_{c}(M,K)$ and
$\Gamma_{c}(\fK) = C^{\infty}_{c}(M,\fk)$ for their compactly supported versions.
One can thus think of gauge groups as `twisted versions' of the group of smooth $K$-valued functions on $M$.
\end{Example}

The motivating example of a gauge group is the group $\Gau(P)$ of vertical automorphisms
of a principal fibre bundle $\pi \colon P \rightarrow M$ with structure group $K$.

\begin{Example}(Gauge groups from principal bundles)
A \emph{vertical automorphism} of a principal fibre bundle $\pi \colon P \rightarrow M$ 
is a $K$-equivariant diffeomorphism \break $\alpha \colon P \rightarrow P$ such that 
$\pi \circ \alpha = \alpha$. 
The group $\Gau(P)$ of vertical automorphisms is called the 
\emph{gauge group} of $P$.
It is isomorphic to the group
\begin{equation}
 C^\infty(P,K)^K 
:= \{ f \in C^\infty(P,K)\,;\, (\forall p \in P, k \in K)\, 
f(pk) = k^{-1}f(p)k\}\,,
\end{equation}
with isomorphism $C^{\infty}(P,K)^{K} \stackrel{\sim}{\rightarrow} \Gau(P)$
given by $f \mapsto \alpha_{f}$ with $\alpha_{f}(p) = p f(p)$.
In order to interpret $\Gau(P)$ as a gauge group in the sense of 
Definition~\ref{def:gaugegroup}, we 
construct the bundle of groups 
$\Conj(P) \rightarrow M$ with typical fibre $K$.
For an element $k \in K$, we write $c_k(g) = kgk^{-1}$ for the induced
inner automorphism of $K$, and also $\Ad_k \in \Aut(\fk)$ for the corresponding 
automorphism of its Lie algebra $\fk$. Define 
the bundle of groups $\Conj(P)\rightarrow M$ by
\[\Conj(P) := {P \times K/\sim}\,,\] 
where $\sim$ is
the relation $(pk,h) \sim (p, c_{k}(h))$ for $p \in P$ and  $k, h \in K$. 
We then have isomorphisms 
\[
\Gau(P) \simeq C^{\infty}(P,K)^{K} \simeq \Gamma(\Conj(P))\,,
\]
where $f \in C^{\infty}(P,K)^{K}$ corresponds to the section $\sigma_{f} \in \Gamma(\Conj(P))$
defined by $\sigma_{f}(\pi(p)) = [p,f(p)]$ for all $p\in P$.
The bundle of Lie algebras associated to $\Conj(P)$ is
the \emph{adjoint bundle} $\mathrm{Ad}(P) \rightarrow M$, 
defined as the quotient 
\[\Ad(P) := {P\times_{\Ad}\fk}\]
of $P \times \fk$ modulo the relation $(pk,X) \sim (p, \mathrm{Ad}_{k}(X))$ for 
$p \in P$, $X \in \fk$ and $k \in K$. 
The \emph{compactly supported gauge group}
$\mathrm{Gau}_{c}(P)\subseteq \Gau(P)$ 
is the group of vertical bundle automorphisms of $P$ 
that are trivial outside the preimage
of some compact set in $M$.
Since it is isomorphic to $\Gamma_{c}(\Conj(P))$, it is a locally convex Lie group 
with Lie algebra $\gau_{c}(P) = \Gamma_{c}(\Ad(P))$.
\end{Example}

\begin{Remark}
Gauge groups arise in field theory, as groups of transformations of the space of 
principal connections on $P$ (the gauge fields). 
If the space-time manifold $M$ is not compact, then one imposes boundary conditions on the 
gauge fields at infinity. Depending on how one does this,
the group $\mathrm{Gau}(P)$ may be too big to preserve the set of admissible gauge fields.
One then expects the group of remaining gauge transformations to at least contain 
$\mathrm{Gau}_{c}(P)$, or perhaps even some larger Lie group of gauge transformations 
specified by a decay condition at infinity 
(cf.\ \cite{Wa10, Go04}). 
\end{Remark}

An \emph{automorphism} of $\pi \colon \cK \rightarrow M$ is a pair $(\gamma, \gamma_{M}) \in \Diff(\cK)\times \Diff(M)$
with $\pi \circ \gamma = \gamma_{M}\circ \pi$, such that for each fibre $\cK_{x}$, the map
$\gamma|_{\cK_{x}} \colon \cK_{x} \rightarrow \cK_{\gamma_{M}(x)}$ is a group homomorphism.
Since $\gamma_{M}$ is determined by $\gamma$, we will omit it from the notation.
We denote the group of automorphisms of $\cK$ by $\Aut(\cK)$.

\begin{Definition}(Geometric $\R$-actions)
In the context of gauge groups, we will be interested in $\R$-actions
$\alpha \colon \R \rightarrow \Aut(\Gamma(\cK))$ that are of \emph{geometric} type.
These are derived from
a 1-parameter group
$\gamma \colon \R \rightarrow \mathrm{Aut}(\cK)$ 
by
\begin{equation}
\alpha_{t}(\sigma) := \gamma_{t} \circ \sigma \circ \gamma_{M, t}^{-1}\,.
\end{equation}
\end{Definition}

\begin{Remark}
If $\cK$ is of the form $\Ad(P)$ for a principal fibre bundle $P\rightarrow M$, 
then a 1-parameter group of automorphisms of $P$ induces a 1-parameter 
group of automorphisms of $\cK$.
If we think of the induced diffeomorphisms $\gamma_{M}(t) \in \mathrm{Diff}(M)$
as time translations, then the automorphisms of $P$ 
encode the time translation behaviour
of the gauge fields.
\end{Remark}

The 1-parameter group $\alpha \colon \R \rightarrow \Aut(\Gamma(\cK))$ of group automorphisms
differentiates to a 1-parameter group $\beta \colon \R \rightarrow \Aut(\Gamma(\fK))$
of Lie algebra automorphisms given by 
\begin{equation}\label{LAaut1par}
\beta_{t}(\xi) = \frac{\partial}{\partial \varepsilon}\Big|_{\varepsilon=0} \gamma_{t}\circ e^{\varepsilon \xi} \circ \gamma_{M,t}^{-1}\,.
\end{equation}
The corresponding derivation $D := \frac{\partial}{\partial t} \big|_{t=0} \beta_{t}$ of $\Gamma(\fK)$ can be described
in terms of the infinitesimal generator $\bv \in \mathfrak{X}(\cK)$ of $\gamma$, given by 
$\bv := \frac{\partial}{\partial t}\big|_{t=0} \gamma_{t}$. 
We identify $\xi \in \Gamma(\fK)$ with the vertical, left invariant vector field $\Xi_{\xi} \in \mathfrak{X}(\cK)$
defined by $\Xi_{\xi}(k_{x}) = \frac{\partial}{\partial\varepsilon}\big|_{\varepsilon=0} k_{x}e^{-\varepsilon \xi(x)}$.
Using the equality $[\bv, \Xi_{\xi}] = \Xi_{D(\xi)}$, we write  
\begin{equation}
D (\xi) = L_{\bv}\xi\,.
\end{equation}
For $\fg = \Gamma_{c}(\fK)$, the Lie algebra $\fg \rtimes_{D}\R$ then has bracket
\begin{equation}\label{eq:smurfenliedje}
[\xi \oplus t, \xi' \oplus t'] = \Big( [\xi,\xi'] + (tL_{\bv}\xi' - t' L_{\bv}\xi)\Big) \oplus 0\,.
\end{equation}

\section{Reduction to simple Lie algebras}
\label{vanseminaarsimpel}

In this note, we will focus attention on the class of 
gauge algebras with a semisimple structure group, not only because they are more accessible, 
but also because they are relevant in applications. We now show that every gauge 
algebra with a \emph{semisimple} structure group can be considered as
a gauge algebra of a bundle with a \emph{simple} structure group 
which need not be the same for all fibers. 
Accordingly, the base manifold $M$ is replaced by a not necessarily connected finite 
cover.

\subsection{From semisimple to simple Lie algebras}
Let $\fK \rightarrow M$ be a smooth locally trivial bundle of Lie algebras with
semisimple fibres. 
We construct a finite cover $\widehat{M} \rightarrow M$ 
and a locally trivial bundle of Lie algebras
$\widehat{\fK} \rightarrow \widehat{M}$ with simple fibres 
such that $\Gamma(\fK) \simeq \Gamma(\widehat{\fK})$ and 
$\Gamma_{c}(\fK) \simeq \Gamma_{c}(\widehat{\fK})$.

Because one can go back and forth between principal fibre bundles and bundles of Lie 
algebras, this shows that every gauge algebra for a principal fibre bundle with 
semisimple structure group is isomorphic to one with a simple structure group.
Indeed, every principal fibre bundle $P\rightarrow M$ with semisimple structure group $K_{i}$
over the connected component $M_i$ of $M$ 
gives rise to the bundle $\Ad(P) \rightarrow M$ 
of Lie algebras. 
Conversely, every Lie algebra bundle $\fK \rightarrow M$ 
with semisimple structure algebra $\fk_{i}$ over the $M_i$ 
gives rise to a principal fibre 
bundle $P_{\fK} \rightarrow M$ with semisimple structure group $\mathrm{Aut}(\fk_{i})$ 
over $M_i$ defined, for $x \in M_i$,  by 
$P_{\fK,x} := \mathrm{Iso}(\fk_{i},\fK_{x})$ for $x \in M_i$. 

\begin{Theorem}\label{reductienaarsimpel} 
{\rm(Reduction from semisimple to simple structure algebras)}
If $\fK \rightarrow M$ is a smooth locally trivial bundle of Lie algebras
with semisimple fibres,
then there exists a
finite cover $\widehat{M} \rightarrow M$ and a smooth locally trivial bundle of Lie 
algebras $\widehat{\fK} \rightarrow \widehat{M}$ with simple fibres
such that there exist isomorphisms  
$\Gamma(\fK) \simeq \Gamma(\widehat{\fK})$ and 
$\Gamma_{c}(\fK) \simeq \Gamma_{c}(\widehat{\fK})$
of locally convex Lie algebras.
\end{Theorem}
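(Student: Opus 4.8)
The plan is to build $\widehat M$ as the bundle whose fibre over $x \in M$ is the finite set of simple ideals of the semisimple Lie algebra $\fK_x$, and to let $\widehat\fK$ be the tautological bundle whose fibre over such a simple ideal is that ideal itself. The isomorphism $\Gamma(\fK) \cong \Gamma(\widehat\fK)$ is then nothing but the fibrewise decomposition of a section into its components along the simple ideals.

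Treating each connected component of $M$ separately, I would fix a model fibre $\fk = \fk_1 \oplus \cdots \oplus \fk_r$ with $\fk_i$ its simple ideals. Since $\fK \to M$ is locally trivial as a bundle of Lie algebras, its transition functions take values in $\Aut(\fk)$. Every automorphism of a semisimple Lie algebra permutes its minimal (equivalently, simple) ideals, so on each overlap the transition function $g_{ij}(x) \in \Aut(\fk)$ induces a permutation $\sigma_{ij}(x)$ of $\{1,\dots,r\}$, together with linear isomorphisms $\fk_i \to \fk_{\sigma_{ij}(x)(i)}$. The assignment $x \mapsto \sigma_{ij}(x)$ is locally constant (the simple ideals vary continuously and their combinatorial type is discrete), and the $\sigma_{ij}$ inherit the cocycle condition from the $g_{ij}$. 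Gluing the trivial covers $U_i \times \{1,\dots,r\}$ along the $\sigma_{ij}$ realises $p \colon \widehat M \to M$ as a finite smooth covering; equivalently, $\widehat M = \{(x,\fa) : x \in M,\ \fa \text{ a simple ideal of } \fK_x\}$ with the smooth structure for which the local labellings $x \mapsto (x, \fk_i(x))$ are diffeomorphisms. Gluing the $U_i \times \fk_i$ along the isomorphisms $\fk_i \to \fk_{\sigma_{ij}(x)(i)}$ then produces a locally trivial bundle of Lie algebras $\widehat\fK \to \widehat M$ whose fibre over $(x,\fa)$ is $\fa$, hence simple.

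It remains to produce the isomorphisms of section spaces. For $\xi \in \Gamma(\fK)$ and a simple ideal $\fa$ of $\fK_x$, let $\xi_\fa(x)$ be the $\fa$-component of $\xi(x)$ in the canonical, automorphism-invariant decomposition $\fK_x = \bigoplus_\fa \fa$, and set $\widehat\xi(x,\fa) := \xi_\fa(x)$; the inverse map sends $\widehat\eta$ to $x \mapsto \sum_{\fa \in p^{-1}(x)} \widehat\eta(x,\fa)$. In a local trivialisation this is just the splitting $C^\infty(U,\fk) \cong \bigoplus_i C^\infty(U,\fk_i)$, so $\widehat\xi$ is a smooth section and $\xi \mapsto \widehat\xi$ is a topological linear isomorphism onto $\Gamma(\widehat\fK)$. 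It is a Lie algebra homomorphism because distinct simple ideals commute, whence $[\xi,\eta]_\fa = [\xi_\fa,\eta_\fa]$ fibrewise; this is the crux of why the decomposition respects the bracket. Finally, since $p$ is finite and hence proper, $\supp(\widehat\xi) \subseteq p^{-1}(\supp\xi)$ is compact precisely when $\supp\xi$ is, giving the restricted isomorphism $\Gamma_c(\fK) \cong \Gamma_c(\widehat\fK)$.

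The main obstacle is the first step: one must verify that the passage $x \mapsto \{\text{simple ideals of } \fK_x\}$ is genuinely locally trivial, i.e.\ that the simple-ideal decomposition can be chosen smoothly and that its combinatorial type is locally constant. This reduces to the facts that the minimal ideals of a semisimple Lie algebra are canonically determined and that any automorphism permutes them, so that the local triviality of $\fK$ is automatically inherited by $\widehat\fK$ with transition data factoring through the symmetric group $\mathfrak{S}_r$. Once this is in place, the remaining smoothness, bijectivity and support verifications are routine.
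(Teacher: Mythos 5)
Your proposal is correct and takes essentially the same route as the paper: the paper builds $\widehat M$ from the maximal ideals $I_x \subseteq \fK_x$ and uses the quotient bundle with fibres $\fK_x/I_x$, which is canonically the same as your cover by simple ideals with the tautological bundle, since each maximal ideal of a semisimple Lie algebra is the complement of a unique simple ideal and $\fK_x/I_x$ identifies with that ideal. Your section-space isomorphism (fibrewise projection onto the simple factors, with inverse given by summing over the fibre of the cover) and the compact-support argument via finiteness of the cover match the paper's proof.
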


The finite cover $\widehat{M} \rightarrow M$ is not necessarily connected, and 
the isomorphism classes of the fibres of $\widehat{\fK} \rightarrow \widehat{M}$ are not 
necessarily the same over different connected components of $\widehat{M}$.

\begin{proof}  
For a finite dimensional semisimple Lie algebra $\fk$, we write $\mathrm{Spec}(\fk)$ for the finite 
set  of maximal ideals of $\fk$, equipped with the discrete topology. 
We now define the set
\[
\widehat{M} := \bigcup_{x\in M} \mathrm{Spec}(\fK_{x})
\] 
with the natural projection $\pr_{\hat M} \colon \widehat{M} \rightarrow M$.
Local trivialisations $\fK|_{U} \simeq U \times \fk$ of $\fK$ 
over open connected subsets $U\subseteq M$
induce compatible bijections between
$\pr_{\hat M}^{-1}(U)$ and the smooth manifold $U \times \mathrm{Spec}(\fk)$.
This provides $\widehat{M}$ with a manifold structure for which 
$\pr_{\hat M} \colon \widehat{M} \rightarrow M$ is a finite 
covering.\footnote{%
Note that non-isomorphic maximal ideals of $\fK_{x}$
are always in different connected components of $\widehat{M}$, whereas 
isomorphic maximal ideals may or may not be in the same connected component,
depending on the bundle structure.}  
We define 
\[
\widehat{\fK} := \bigcup_{I_x\in \widehat{M}} \fK_{x}/I_{x}
\]
with the natural projection $\pi \colon \widehat{\fK} \rightarrow \widehat{M}$.
Local trivialisations $\fK|_{U} \simeq U \times \fk$ of $\fK$ 
yield bijections between $\widehat{\fK}|_{U}$
and the disjoint union 
\[\bigsqcup_{I\in \mathrm{Spec}(\fk)} U_{I} \times (\fk/I)\,,\]
where $U_{I} \simeq U$ is the connected component of $\pr_{\hat M}^{-1}(U)$ corresponding
to the maximal ideal $I\subseteq \fk$ in the particular trivialisation.
Since different trivialisations differ by Lie algebra automorphisms of the fibres,
which permute the ideals in $U_{I}$ and $\fk/I$ alike,
the projection $\pi \colon \widehat{\fK} \rightarrow \widehat{M}$ becomes a smooth locally trivial
bundle of Lie algebras over $\widehat{M}$.

The morphism 
$\Phi \colon \Gamma(\fK) \rightarrow \Gamma(\widehat{\fK})$ of Fr\'echet Lie algebras
defined by
\[
\Phi(\sigma)(I_{x}) := \sigma(x) + I_x \]
is an isomorphism; because the fibres are semisimple, the injection 
$\fK_{x}/I_{x} \hookrightarrow \fK_{x}$
allows one to construct the inverse
\[
\Phi^{-1}(\tau)(x) = \sum_{I_x \in \mathrm{Spec}(\fK_{x})}  \tau(I_{x})\,.
\]
Since the projection $\pr_{\hat M} \colon \widehat{M} \rightarrow M$ is a finite cover, this
induces an isomorphism
$\Phi \colon \Gamma_{c}(\fK) \rightarrow \Gamma_{c}(\widehat{\fK})$
of LF-Lie algebras.
\end{proof}


Clearly, a smooth 1-parameter family of automorphisms of $\fK \rightarrow M$ acts naturally 
on the maximal ideals, so we obtain a smooth action on $\widehat{M} \rightarrow M$
and on $\widehat{\fK} \rightarrow \widehat{M}$.
The action on $\widehat{M}$ is locally free or periodic if and only if the action on $M$ is,
and then the period on $\widehat{M}$ is a multiple of the period on $M$.

\begin{Example}If $\fk$ is a simple Lie algebra, then $\widehat{M} = M$.
\end{Example}
\begin{Example}
If $P = M \times K$ is trivial, then $\hat M = M \times \Spec(\fk)$ and all connected 
components of $\hat M$ are diffeomorphic to~$M$.
\end{Example}

\begin{Example}
If $\fk$ is a semisimple Lie algebra with $r$ simple ideals
that are mutually non-isomorphic, then
$\widehat{M} = \bigsqcup_{i=1}^{r} M$ is a disjoint union of copies of~$M$. 
\end{Example}

\begin{Example}(Frame bundles of 4-manifolds)\label{Ex:MhatOrientable} 
Let $M$ be a 4-dimensional Riemannian manifold. 
Let $P := {\rm OF}(M)$ be the principal $\OO(4,\R)$-bundle of orthogonal frames.
Then $\fk = \mathfrak{so}(4,\R)$ is isomorphic to 
$\mathfrak{su}_L(2,\C) \oplus \mathfrak{su}_R(2,\C)$.
The group $\pi_0(K)$ is of order 2, the non-trivial element acting by 
conjugation with $T = \mathrm{diag}(-1,1,1,1)$.
Since this permutes the two simple ideals, 
the manifold $\widehat{M}$ is the orientable double cover of $M$. This is 
the disjoint union $\widehat{M} = M_{L} \sqcup M_{R}$ of two copies of $M$ 
if $M$ is orientable, and a connected twofold cover $\widehat{M} \rightarrow M$ if it is not. 
%
\end{Example}

\subsection{Compact and noncompact simple Lie algebras}

A semisimple Lie algebra $\fk$ is called $\emph{compact}$ if its Killing form 
is negative definite. 
Every semisimple Lie algebra $\fk$ is a direct sum $\fk = \fk_{\mathrm{cpt}} \oplus \fk_{\mathrm{nc}}$,
where $\fk_{\mathrm{cpt}}$ is the direct sum of all compact ideals of $\fk$
(or, alternatively, its maximal compact quotient), and $\fk_{\mathrm{nc}}$
is the direct sum of the noncompact ideals.
Since the decomposition $\fk = \fk_{\mathrm{cpt}} \oplus \fk_{\mathrm{nc}}$ is invariant 
under $\mathrm{Aut}(\fk)$, every Lie algebra bundle bundle $\fK \rightarrow M$ 
can be written as a 
direct sum 
\begin{equation}\label{eq:dirsumcptnc}
\fK = \fK_{\mathrm{cpt}} \oplus \fK_{\mathrm{nc}}
\end{equation}
of Lie algebra bundles over $M$.
Correspondingly, we have the decomposition 
\begin{equation}
\widehat{M} = \widehat{M}_{\mathrm{cpt}} \sqcup \widehat{M}_{\mathrm{nc}}
\end{equation}
of $\widehat{M}$ into disjoint submanifolds, 
$\widehat{M}_{\mathrm{cpt}}$ and $\widehat{M}_{\mathrm{nc}}$,
containing
the maximal ideals $I_{x} \subset \fK_{x}$
with $\fK_{x}/I_{x}$ compact and noncompact, respectively.
Writing $\widehat{\fK}_{\mathrm{cpt}}$ 
for the restriction of $\widehat{\fK}$ to $\widehat{M}_{\mathrm{cpt}}$
and $\widehat{\fK}_{\mathrm{nc}}$ 
for its restriction to $\widehat{M}_{\mathrm{nc}}$, 
we find Lie algebra bundles $\widehat{\fK}_{\mathrm{cpt}} \rightarrow \widehat{M}_{\mathrm{cpt}}$ and 
$\widehat{\fK}_{\mathrm{nc}} \rightarrow \widehat{M}_{\mathrm{nc}}$ with compact 
and noncompact simple fibres respectively, and Fr\'echet Lie algebra isomorphisms 
\begin{equation}
\Gamma(\fK_{\mathrm{cpt}}) \simeq \Gamma(\widehat{\fK}_{\mathrm{cpt}}) \quad\text{and}\quad
\Gamma(\fK_{\mathrm{nc}}) \simeq \Gamma(\widehat{\fK}_{\mathrm{nc}})\,.
\end{equation}

\section{Universal invariant symmetric bilinear forms}
\label{sec:3.3} 

In Section~\ref{GySsCoc}, we will undertake a detailed analysis
of the 2-cocycles of $\fg \rtimes_{D} \R$
for compactly supported gauge algebras $\fg := \Gamma_{c}(\fK)$
with semisimple structure group $K$.
In order to describe the relevant 2-cocycles, we 
need to introduce universal invariant symmetric 
bilinear forms on the Lie algebra $\fk$ of the structure group.
In the case that $\fk$ is a compact simple Lie algebra, 
this is simply the Killing form.

\subsection{Universal invariant symmetric bilinear forms}\label{invbil}

Let $\fk$ be a finite dimensional Lie algebra.
Then its automorphism group $\Aut(\fk)$ is a closed subgroup 
of $\mathrm{GL}(\fg)$, hence a Lie group with Lie algebra $\der(\fk)$.
Since $\der(\fk)$ acts trivially on the quotient
\[V(\fk) := S^2(\fk)/{(\der(\fk) \cdot S^2(\fk))}\,\]
of the
twofold symmetric tensor power $S^2(\fk)$, 
the 
the $\Aut(\fk)$-representation on $V(\fk)$ factors through $\pi_0(\Aut(\fk))$.
The \emph{universal $\der(\fk)$-invariant symmetric bilinear form} is defined by
\[ \kappa \colon \fk \times \fk \rightarrow V(\fk), \quad 
\kappa(x,y) := [x\otimes_{s}y] = \frac{1}{2}[x \otimes y + y \otimes x].\]
We associate to
$\lambda \in V(\fk)^*$
the $\R$-valued, $\der(\fk)$-invariant, symmetric, bilinear form 
$\kappa_{\lambda} := \lambda \circ \kappa$.
This correspondence  
is a bijection between $V(\fk)^*$ and the space of $\der(\fk)$-invariant symmetric bilinear forms 
on $\fk$. 

For the convenience of the reader, we now list some properties of 
$V(\fk)$ for (semi)simple Lie algebras $\fk$, in which case $\der(\fk) = \fk$. 
These results will be used in the rest of the paper.
We refer 
to \cite[App.~B]{NW09} for proofs and a more detailed exposition.

For a simple real Lie algebra $\fk$,
we have $V(\fk) \simeq \K$, with $\K = \C$ if $\fk$ 
admits a complex structure, and $\K = \R$ if it does not, i.e., if $\fk$ is {\it absolutely simple}. 
The universal invariant symmetric bilinear form can be identified with 
the Killing form of the real Lie algebra $\fk$ if $\K = \R$ 
and the Killing form of the underlying complex Lie algebra if $\K = \C$.
In particular, in the important special case that $\fk$ is a compact simple Lie algebra,
the universal invariant bilinear form $\kappa \colon \fk \times \fk \rightarrow V(\fk)$
is 
simply the negative definite Killing form $\kappa \colon \fk \times \fk \rightarrow \R$, 
$\kappa(x,y) = {\rm tr}(\ad x \ad y)$.

For a semisimple real Lie algebra $\fk = \bigoplus_{i=1}^{r}\fk_{i}^{m_i}$, 
where the simple ideals $\fk_i$ are mutually non-isomorphic,  
one finds $V(\fk) \simeq \bigoplus_{i=1}^{r}V(\fk_i)^{m_i}$ with $V(\fk_i)$ 
isomorphic to $\R$ or $\C$. 
The 
action of $\pi_{0}(\Aut(\fk))$ on $V(\fk)$ leaves invariant the subspaces $V(\fk_i)^{m_i}$
coming from the isotypical ideals $\fk_i^{m_i}$. 
If $V(\fk_i)\simeq \R$, then
the action of $\pi_0(\Aut(\fk))$ on $V(\fk_i)^{m_i}$ factors through the homomorphism 
$\pi_{0}(\Aut(\fk)) \rightarrow S_{m_i}$
that maps $\alpha \in \Aut(\fk)$ to the permutation it induces on the set of ideals isomorphic 
to~$\fk_i$.
If $V(\fk_i)\simeq \C$, then
the action on $\C^{m_i}$ factors through a homomorphism 
$\pi_{0}(\Aut(\fk)) \rightarrow {(\Z/2\Z)^{m_i}\rtimes S_{m_i}}$, 
where the symmetric group $S_{m_i}$ acts by permuting components 
and $(\Z/2\Z)^{m_i}$ acts by complex conjugation in the components.

\subsection{The flat bundle $\bV = V(\fK)$} \label{flatbdl}

If $\fK \rightarrow M$ is a bundle of Lie algebras, we denote by $\bV \rightarrow M$
the vector bundle with fibres $\bV_{x} = V(\fK_{x})$. 
It carries a canonical flat 
connection $\dd$, defined by 
$\dd\kappa(\xi,\eta) := \kappa(\nabla\xi,\eta) + \kappa(\xi,\nabla\eta)$ for $\xi, \eta \in \Gamma(\fK)$,
where $\nabla$ is a \emph{Lie connection} on $\fK$, meaning that 
$\nabla [\xi,\eta] = [\nabla \xi , \eta] + [\xi, \nabla \eta]$ for all $\xi,\eta \in \Gamma(\fK)$.
As any two Lie connections differ by a $\der(\fK)$-valued 1-form, 
this definition is independent of the choice of $\nabla$ (cf.\ \cite{JW13}).


If $\fK$ has semisimple typical fibre $\fk$, then the 
isotypical ideals $\fk_{i}^{m_i}$ in the decomposition $\fk = \bigoplus_{i=1}^{r}\fk_{i}^{m_i}$
are $\Aut(\fk)$-invariant, so that we obtain a direct sum decomposition
\[\bV = \bigoplus_{i=1}^{r} \bV_i\]
of flat bundles.

If the ideal $\fk_i$ is absolutely simple, which is always the case 
if $\fk$ is a compact Lie algebra, then the structure group of $\bV_i$
reduces to $S_{m_i}$.
In particular, if $\fk$ is compact simple, then 
$\bV$ is simply the trivial line bundle $M \times \R \rightarrow M$.

If the ideal $\fk_i$ possesses a complex structure, then the 
structure group of $\bV_i$ reduces to 
$(\Z/2\Z)^{m_i}\rtimes S_{m_i}$.
In particular, for $\fk$ complex simple, the bundle $\bV \rightarrow M$ is the 
vector bundle with fibre $\C$, and $\alpha \in \Aut(\fk)$ flips 
the complex structure on $\C$ if and only if it flips the complex structure on $\fk$.
If $\fK = \Ad(P)$ for a principal fibre bundle $P \rightarrow M$ with complex simple structure group $K$, 
then $\bV$ is the trivial bundle $M \times \C \rightarrow M$.


\section{Central extensions of gauge algebras} 
\label{GySsCoc}

Let $\fg$ be 
the compactly supported gauge algebra $\Gamma_{c}(\fK)$
for a Lie algebra bundle $\fK \rightarrow M$ with semisimple fibres.
In this section, we will classify all possible central extensions of 
$\fg \rtimes_{D} \R$, in other words, we will
calculate the continuous second Lie algebra
cohomology $H^2(\fg \rtimes_{D} \R,\R)$ with trivial coefficients.
 In \cite{JN16} we will examine which of these cocycles comes 
 from a positive energy representation.

\begin{Remark}\label{Rk:perfectcocycles} 
For a cocycle $\omega$ on $\g \rtimes_D \R$, the relation 
\begin{equation}
  \label{eq:cocd}
\omega(D,[\xi,\eta]) = \omega(D\xi, \eta) + \omega(\xi,D\eta) 
\end{equation}
shows that $i_D \omega$ measures the non-invariance of 
the restriction of $\omega$ to $\g \times \g$ under the derivation~ $D$. 
It also shows that, if the Lie algebra $\g$ is perfect, then the linear functional 
$i_D\omega \: \g \to \R$ is completely determined by~\eqref{eq:cocd}. 
\end{Remark}

\subsection{Definition of the 2-cocycles}\label{subsec:2cocyc}
We define 2-cocycles $\omega_{\lambda,\!\nabla}$ on $\fg \rtimes_{D} \R$ such that their classes span the cohomology group $H^2(\fg \rtimes_{D} \R,\R)$.
They depend on a \emph{$\bV$-valued 
$1$-current} $\lambda \in \Omega^1_{c}(M,\bV)'$, and on a 
\emph{Lie connection} $\nabla$ on $\fK$.
%
%
Recall from Section \ref{sec:3.3}  
that $\kappa \colon \fk \times \fk \rightarrow V(\fk)$ is the universal
invariant bilinear form of $\fk$, and $\bV\rightarrow M$ is the flat bundle
with fibres $\bV_{x} = V(\fK_{x})$.
In the important special case that $\fk$ is compact simple, $V(\fk) = \R$, 
$\kappa$ is the Killing form, and $\bV$ is the trivial real line bundle.

A $1$-current $\lambda \in \Omega^1_{c}(M,\bV)'$ is said to be 
\begin{itemize}
\item[\rm(L1)]  {\it closed} if $\lambda(\dd C^\infty_c(M,\bV)) = 0$, and 
\item[\rm(L2)]  {\it $\pi_*\bv$-invariant} if 
$\lambda(L_{\pi_*\bv}\Omega^1_{c}(M,\bV)) = \{0\}$. 
\end{itemize}
Given a closed $\pi_*\bv$-invariant current $\lambda \in \Omega^1_{c}(M,\bV)'$, 
we define the
2-cocycle $\omega_{\lambda,\!\nabla}$ on $\g \rtimes_{D}\R$ 
by skew-symmetry and the equations
\begin{eqnarray}
\omega_{\lambda,\!\nabla}(\xi,\eta) &=& \lambda(\kappa(\xi,\nabla\eta)),\label{cdef1}\\
\omega_{\lambda,\!\nabla}(D,\xi) &=& \lambda(\kappa(L_{\bv}\nabla,\xi))\,, \label{cdef2}
\end{eqnarray}
where we write $\xi$ for $(\xi,0) \in \fg \rtimes_{D} \R$ and $D$ 
for $(0,1) \in \fg \rtimes_{D} \R$ as in \eqref{eq:d-elt}. 
We define the $\der(\fK)$-valued 1-form 
$L_{\bv}\nabla \in \Omega^1(M,\der(\fK))$ by 
\begin{equation} \label{eq:defcurv}
(L_{\bv}\nabla)_{w}(\xi) = L_{\bv} (\nabla\xi)_{w} - \nabla_{w}L_{\bv}\xi = 
L_{\bv}(\nabla_{w}\xi) - \nabla_{w}L_{\bv}\xi - \nabla_{[\pi_*\bv,w]} \xi
\end{equation}
for all $w \in \mathfrak{X}(M)$, $\xi \in \Gamma(\fK)$. Since the fibres of $\fK \rightarrow M$ are semisimple,
all derivations are inner, so we can identify $L_{\bv}\nabla$
with an element of $\Omega^1(M,\fK)$.
Using the formul\ae{} 
\begin{eqnarray}
\dd\kappa(\xi,\eta) &=& \kappa(\nabla \xi, \eta) + \kappa(\xi, \nabla \eta),\label{fijneformule1} \\
L_{\pi_*\bv}\kappa(\xi,\eta) &=& 
\kappa(L_\bv\xi,\eta) + \kappa(\xi, L_\bv\eta),\label{fijneformule2}\\
{}L_\bv(\nabla \xi) - \nabla L_\bv \xi &=& [L_{\bv}\nabla,\xi],\label{fijneformule3}
\end{eqnarray}
it is not difficult to check that $\omega_{\lambda,\!\nabla}$ is a cocycle. 
Skew-symmetry follows from (\ref{fijneformule1}) and (L1). 
The vanishing of
$\delta\omega_{\lambda,\!\nabla}$ on $\fg$ follows from
(\ref{fijneformule1}), the derivation property of $\nabla$ and 
invariance of $\kappa$. Finally, $i_{D}\delta\omega_{\lambda,\!\nabla} = 0$
follows from skew-symmetry, (\ref{fijneformule3}),
(\ref{fijneformule2}), (L2) and the invariance of $\kappa$.

Note that the class $[\omega_{\lambda,\!\nabla}]$ in $H^2(\fg\rtimes_{D}\R,\R)$
depends only on $\lambda$, not on $\nabla$. Indeed, 
two connection $1$-forms $\nabla$ and $\nabla'$ differ by $A \in \Omega^1(M,\der(\fK))$.
Using $\der(\fK) \simeq \fK$, we find
\[ \omega_{\lambda,\!\nabla'} - \omega_{\lambda,\!\nabla} = \delta \chi_{A}
\quad \mbox{ with } \quad 
\chi_{A}(\xi  \oplus t) := \lambda(\kappa(A,\xi)).\]

\subsection{Classification of central extensions}

We now show that every continuous Lie algebra $2$-cocycle on 
$\fg \rtimes_{D} \R$ is cohomologous to one of the type 
$\omega_{\lambda,\!\nabla}$ as defined in \eqref{cdef1} and \eqref{cdef2}. 
The proof relies on a description of $H^2(\fg,\R)$ provided 
by the following theorem (\cite[Prop.~1.1]{JW13}). 

\begin{Theorem} {\rm(Central extensions of gauge algebras)} \label{Ijkcykel}
Let $\fg$ be the compactly supported gauge algebra 
$\fg = \Gamma_c(\fK)$ of a Lie algebra bundle $\fK \rightarrow M$ with semisimple fibres.
Then every continuous 2-cocycle 
is cohomologous to one of the form
\[ \psi_{\lambda,\!\nabla}(\xi,\eta) = \lambda(\kappa(\xi, \nabla\eta)),\]
where $\lambda \colon \Omega_{c}^{1}(M,\bV) \rightarrow \R$ is a continuous
linear functional that vanishes on $\dd\Omega^{0}_{c}(M,\bV)$,
and $\nabla$ is a Lie connection on $\fK$.
Two such cocycles $\psi_{\lambda,\!\nabla}$ and $\psi_{\lambda',\!\nabla'}$ are equivalent 
if and only if $\lambda = \lambda'$.
\end{Theorem}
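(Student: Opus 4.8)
The plan is to establish the result cited as Theorem~\ref{Ijkcykel} by analyzing $H^2(\fg,\R)$ directly from the structure of $\fg = \Gamma_c(\fK)$. The first observation I would exploit is that $\fg$ is perfect (since the fibres $\fK_x$ are semisimple, hence perfect, and this passes to compactly supported sections via a partition-of-unity argument). For a perfect Lie algebra, every central extension is determined by its cocycle up to the coboundaries, and there is a close relationship between $H^2(\fg,\R)$ and the space of invariant symmetric bilinear forms valued in a module. The strategy is therefore to reduce the classification of continuous $2$-cocycles to the study of $\fk$-invariant symmetric bilinear forms on the fibre, which is exactly what the universal form $\kappa$ valued in $V(\fk)$ encodes. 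Using the reduction in Theorem~\ref{reductienaarsimpel}, I may assume the fibres are simple, which streamlines the fibrewise analysis considerably.

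The main construction runs as follows. Given a continuous $2$-cocycle $\psi$ on $\fg$, I would first show that after subtracting a coboundary one may assume $\psi$ is \emph{local}, in the sense that $\psi(\xi,\eta) = 0$ whenever $\xi$ and $\eta$ have disjoint supports; this is a standard but delicate step relying on continuity and the LF-space structure, typically proved by a cutoff and bump-function argument. Once locality is established, $\psi$ is governed by a bidifferential expression, and invariance forces it to factor through $\kappa$ composed with a first-order operator in the second variable. Choosing a Lie connection $\nabla$ on $\fK$ (which exists because the fibres are semisimple, so all derivations are inner and the relevant obstruction vanishes), I would write the first-order part as $\kappa(\xi,\nabla\eta)$ and absorb the resulting $\bV$-valued density into a current $\lambda \in \Omega^1_c(M,\bV)'$. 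The cocycle identity then translates precisely into the closedness condition that $\lambda$ annihilate $\dd\Omega^0_c(M,\bV)$, via the formula $\dd\kappa(\xi,\eta) = \kappa(\nabla\xi,\eta) + \kappa(\xi,\nabla\eta)$ recorded in \eqref{fijneformule1}.

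For the injectivity statement, I would show that if $\psi_{\lambda,\nabla}$ is a coboundary $\delta\chi$, then $\lambda$ must vanish. The idea is to evaluate $\delta\chi(\xi,\eta) = -\chi([\xi,\eta])$ against test configurations where $\xi,\eta$ are supported in a small trivialising chart and $\eta$ varies through a cutoff times a constant section; perfectness of the fibre then forces $\lambda(\kappa(\xi,\nabla\eta))$ to be expressible through a lower-order (zeroth-order) functional, and comparing orders shows $\lambda = 0$. The independence of the class from $\nabla$ follows because two Lie connections differ by an $A \in \Omega^1(M,\der(\fK)) \simeq \Omega^1(M,\fK)$, and a direct computation exhibits $\psi_{\lambda,\nabla'} - \psi_{\lambda,\nabla} = \delta\chi_A$ with $\chi_A(\xi) = \lambda(\kappa(A,\xi))$, exactly as in the computation already displayed in Section~\ref{subsec:2cocyc}.

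I expect the main obstacle to be the \textbf{locality reduction}: proving that an arbitrary continuous cocycle is cohomologous to a local one. Unlike in the finite-dimensional or purely algebraic setting, here one must control the behaviour of $\psi$ across the boundaries of supports using only continuity on the LF-space $\Gamma_c(\fK)$, and the coboundary one subtracts must itself be a genuine continuous $1$-cochain. This typically requires a careful globalisation over the cover $(U_i)$ together with an argument that the ``off-diagonal'' contributions assemble into $\delta$ of something well-defined; the semisimplicity of the fibres enters decisively here, since it guarantees there are no invariant linear functionals on $\fk$ to obstruct the normalisation. The remaining steps --- fibrewise invariant theory via $\kappa$, and the order-counting for injectivity --- are comparatively routine once locality is in hand.
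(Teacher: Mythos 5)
The first thing to note is that the paper contains no proof of Theorem~\ref{Ijkcykel} to compare against: the statement is imported wholesale from \cite[Prop.~1.1]{JW13}, whose proof (building on Maier's classification of continuous cocycles of current algebras) constructs the universal central extension of $\Gamma_c(\fK)$ with kernel $\Omega^1_c(M,\bV)/\dd\Gamma_c(\bV)$, of which the present statement is the dual formulation. Your proposal must therefore stand on its own, and while it tracks the same general route as the cited proof --- locality, reduction to a bidifferential expression, fibrewise invariant theory through $\kappa$, a Lie connection to globalise --- the steps carrying essentially all of the weight are asserted rather than proved, so what you have is a plan rather than a proof.

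Concretely: (i) the step you single out as the main obstacle, vanishing of $\psi$ on disjointly supported pairs, is in fact automatic and needs no coboundary subtraction: if $\supp\xi \cap \supp\eta = \emptyset$, choose an open $U \supseteq \supp\xi$ whose closure avoids $\supp\eta$, write $\xi = \sum_i [\alpha_i,\beta_i]$ with $\alpha_i,\beta_i \in \Gamma_c(\fK|_U)$ (possible because $\Gamma_c(\fK|_U)$ is itself a perfect gauge algebra, \cite[Prop.~2.4]{JW13}), and apply the cocycle identity: $\psi([\alpha_i,\beta_i],\eta) = \psi(\alpha_i,[\beta_i,\eta]) - \psi(\beta_i,[\alpha_i,\eta]) = 0$. (ii) The genuinely hard content, which your outline compresses into ``invariance forces it to factor through $\kappa$ composed with a first-order operator,'' consists of three nontrivial claims: that diagonal support of the kernel plus continuity on the LF-space $\Gamma_c(\fK)$ yields an honest bidifferential operator (a Peetre/kernel-theorem argument); that no derivatives of order $\geq 2$ survive the cocycle identity; and that the invariant theory of the semisimple fibre (Whitehead lemmas, $(S^2\fk^*)^{\fk} \cong V(\fk)^*$, control of skew invariants) pins the symbol down to $\kappa(\xi,\nabla\eta)$ modulo coboundaries. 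This is the substance of Maier's theorem and of \cite{JW13}; calling it ``comparatively routine'' inverts where the difficulty actually lies. (iii) Your injectivity argument by ``comparing orders'' can be replaced by a precise two-line argument, the same device the paper uses in proving Theorem~\ref{EqIjkcykel}: test $\psi_{\lambda,\nabla} = \delta\chi$ on commuting elements $\xi = fX$, $\eta = gX$ over a trivialising chart $U$; the coboundary side vanishes because $[fX,gX]=0$, while the cocycle side equals $\lambda(f\,\dd g\,\kappa(X,X))$; since finite sums of forms $f\,\dd g$ exhaust $\Omega^1_c(U)$, the elements $\kappa(X,X)$ span $V(\fk)$ by polarisation, and $\Omega^1_c(M,\bV) = \sum_i \Omega^1_c(U_i,\bV)$ over a trivialising cover, this forces $\lambda = 0$.
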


Using this, we classify the continuous central extensions of $\fg \rtimes_{D} \R$.
\begin{Theorem} {\rm(Central extensions of extended gauge algebras)}\label{EqIjkcykel}
Let $\cK\rightarrow M$ be a bundle of Lie groups with semisimple fibres, equipped with a 
1-parameter group of automorphisms with generator $\bv \in \mathfrak{X}(\cK)$. Let
$\fg = \Gamma_c(\fK)$ be the compactly supported gauge algebra, and let $\fg \rtimes_{D}\R$
be the Lie algebra \eqref{eq:smurfenliedje}.
Then the map $\lambda \mapsto [\omega_{\lambda,\!\nabla}]$ induces an isomorphism
\[\Big(\Omega^1_{c}(M,\bV) / \big(\dd\Omega^0_{c}(M,\bV) + L_{\pi_*\bv}\Omega^1_{c}(M,\bV)\big)\Big)'
\stackrel{\sim}{\longrightarrow} H^2(\fg\rtimes_{D}\R,\R) \] 
between the space of closed $\pi_*\bv$-invariant $\bV$-valued currents and 
$H^2(\g \rtimes_D \R,\R)$. 
\end{Theorem}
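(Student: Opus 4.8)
The plan is to reduce everything to the known computation of $H^2(\fg,\R)$ in Theorem~\ref{Ijkcykel} by restricting cocycles to the ideal $\fg$. The key structural input is that $\fg=\Gamma_c(\fK)$ is \emph{perfect}: since the fibres are semisimple one has $\fk=[\fk,\fk]$ fibrewise, and a partition of unity argument upgrades this to $\fg=[\fg,\fg]$. In particular $H^1(\fg,\R)=0$, and by Remark~\ref{Rk:perfectcocycles} the functional $i_D\omega$ of any cocycle is already determined by $\omega\res_{\fg\times\fg}$. I would encode a continuous $2$-cochain $\omega$ on $\fg\rtimes_D\R$ by the pair $(\sigma,\ell)$ with $\sigma:=\omega\res_{\fg\times\fg}$ and $\ell:=i_D\omega\in\fg'$ (note $\omega(D,D)=0$). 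Writing $(L_D\sigma)(\xi,\eta):=\sigma(L_\bv\xi,\eta)+\sigma(\xi,L_\bv\eta)$ for the Lie derivative of cochains (a chain map, so $L_D$ descends to $H^2(\fg,\R)$), the cocycle identity splits into $\delta\sigma=0$ together with relation~\eqref{eq:cocd}, which says that $L_D\sigma$ equals the coboundary $\delta(-\ell)$; the coboundaries are exactly the pairs $(\delta\mu,-\mu\circ L_\bv)$ for $\mu\in\fg'$. Since $\fg$ is perfect, for a given $\sigma\in Z^2(\fg,\R)$ a functional $\ell$ with $\delta(-\ell)=L_D\sigma$ is unique if it exists, and it exists precisely when $[L_D\sigma]=0$. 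Passing to the quotient yields a natural isomorphism
\[
H^2(\fg\rtimes_D\R,\R)\;\cong\;\ker\big(L_D\colon H^2(\fg,\R)\to H^2(\fg,\R)\big).
\]

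Next I would transport this kernel to the space of currents via Theorem~\ref{Ijkcykel}, which identifies $H^2(\fg,\R)$ with the closed $\bV$-valued currents through $[\psi_{\lambda,\!\nabla}]\leftrightarrow\lambda$, and compute the induced action of $L_D$. Using \eqref{fijneformule2} and \eqref{fijneformule3} together with the invariance of $\kappa$, and recognising the error term $\kappa(\xi,[L_\bv\nabla,\eta])$ as a coboundary, a direct manipulation gives, as cochains on $\fg$,
\[
L_D\psi_{\lambda,\!\nabla}=\psi_{\lambda\circ L_{\pi_*\bv},\,\nabla}-\delta\chi_{L_\bv\nabla},
\]
where $\chi_A(\xi):=\lambda(\kappa(A,\xi))$ is the primitive already used in Section~\ref{subsec:2cocyc} and $\lambda\circ L_{\pi_*\bv}$ is the current $\beta\mapsto\lambda(L_{\pi_*\bv}\beta)$, which is again closed because $L_{\pi_*\bv}$ commutes with $\dd$. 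Hence $[L_D\psi_{\lambda,\!\nabla}]=[\psi_{\lambda\circ L_{\pi_*\bv},\nabla}]$, and the uniqueness clause of Theorem~\ref{Ijkcykel} shows this class vanishes if and only if $\lambda\circ L_{\pi_*\bv}=0$ on all of $\Omega^1_c(M,\bV)$, i.e.\ iff $\lambda$ satisfies (L2). Therefore $\ker(L_D)$ corresponds exactly to the closed, $\pi_*\bv$-invariant currents, which is the dual space $\big(\Omega^1_c(M,\bV)/(\dd\Omega^0_c(M,\bV)+L_{\pi_*\bv}\Omega^1_c(M,\bV))\big)'$ in the statement.

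It then remains to check that this abstract isomorphism is realised by $\lambda\mapsto[\omega_{\lambda,\!\nabla}]$. Injectivity is immediate: if $[\omega_{\lambda,\!\nabla}]=0$, restricting to $\fg$ gives $[\psi_{\lambda,\!\nabla}]=0$, whence $\lambda=0$ by Theorem~\ref{Ijkcykel}. For surjectivity I would take an arbitrary continuous cocycle $\omega$, set $\sigma=\omega\res_{\fg\times\fg}$, and use Theorem~\ref{Ijkcykel} to write $\sigma=\psi_{\lambda,\!\nabla}+\delta\mu$ for a closed current $\lambda$ and some $\mu\in\fg'$; subtracting the coboundary $\delta\mu$ (extended by $\mu(D):=0$) I may assume $\omega\res_{\fg\times\fg}=\psi_{\lambda,\!\nabla}$. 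The mere existence of $\omega$ forces $[L_D\psi_{\lambda,\!\nabla}]=0$, so by the computation above $\lambda$ is $\pi_*\bv$-invariant and $\omega_{\lambda,\!\nabla}$ is defined. Finally $\omega$ and $\omega_{\lambda,\!\nabla}$ have the same restriction to $\fg$, so by perfectness their $i_D$-parts coincide, both being the unique $\ell$ determined by \eqref{eq:cocd} and given explicitly by \eqref{cdef2}; hence $\omega=\omega_{\lambda,\!\nabla}$, so $[\omega]=[\omega_{\lambda,\!\nabla}]$.

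The main obstacle is the middle step: proving the equivariance formula $[L_D\psi_{\lambda,\!\nabla}]=[\psi_{\lambda\circ L_{\pi_*\bv},\nabla}]$, that is, that the isomorphism of Theorem~\ref{Ijkcykel} intertwines $L_D$ on $H^2(\fg,\R)$ with the geometric Lie derivative $L_{\pi_*\bv}$ on currents. This rests on the compatibility formulae \eqref{fijneformule2}--\eqref{fijneformule3} and on identifying the surviving term $\kappa(\xi,[L_\bv\nabla,\eta])$ with $\delta\chi_{L_\bv\nabla}$. Two routine but necessary bookkeeping points accompany it: the perfectness of $\fg$, used both for $H^1(\fg,\R)=0$ and for the uniqueness of $\ell$, and continuity --- the vanishing of a class $[L_D\sigma]$ must be witnessed by a \emph{continuous} primitive, which is precisely what the continuous version of Theorem~\ref{Ijkcykel} guarantees.
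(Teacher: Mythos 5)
Your proposal is correct, and its outer skeleton coincides with the paper's: restrict a cocycle $\omega$ to the ideal $\fg$, invoke Theorem~\ref{Ijkcykel} to write the restriction as $\psi_{\lambda,\!\nabla}$ plus a coboundary, show $\lambda$ is $\pi_*\bv$-invariant, and use perfectness of $\fg$ (via Remark~\ref{Rk:perfectcocycles}) to see that the $i_D$-component is then forced, so that $\omega$ is cohomologous to $\omega_{\lambda,\!\nabla}$; injectivity is handled identically in both arguments. Where you genuinely diverge is in the key step, the $\pi_*\bv$-invariance of $\lambda$. The paper establishes this by hand: it evaluates the identity \eqref{DinG} on commuting pairs $\xi = fX$, $\eta = gX$ in a local trivialisation, obtaining $\lambda\big(L_{\pi_*\bv}(f\,\dd g\,\kappa(X,X))\big)=0$, and then uses polarisation, $\kappa(\fk,\fk)=V(\fk)$, and a trivialising cover to conclude $\lambda(L_{\pi_*\bv}\Omega^1_c(M,\bV))=\{0\}$. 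You instead organise everything cohomologically: the pair decomposition $(\sigma,\ell)=(\omega\res_{\fg\times\fg},\,i_D\omega)$ gives the general isomorphism $H^2(\fg\rtimes_D\R,\R)\cong\ker\big(L_D \text{ on } H^2(\fg,\R)\big)$ for a perfect ideal, and you transport $L_D$ through Theorem~\ref{Ijkcykel} via $L_D\psi_{\lambda,\!\nabla}=\psi_{\lambda\circ L_{\pi_*\bv},\,\nabla}-\delta\chi_{L_\bv\nabla}$ --- the same computation as the paper's display after \eqref{DinG}, using \eqref{fijneformule2}, \eqref{fijneformule3} and invariance of $\kappa$ --- so that invariance of $\lambda$ follows from the uniqueness clause of Theorem~\ref{Ijkcykel} applied to the current $\lambda\circ L_{\pi_*\bv}$ (legitimately so: this current is continuous and kills $\dd\Omega^0_c(M,\bV)$ because $L_{\pi_*\bv}$ commutes with $\dd$). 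What your route buys: the explicit local spanning argument disappears, being outsourced to the injectivity statement of Theorem~\ref{Ijkcykel} (i.e.\ to \cite{JW13}), and you isolate the reusable structural fact that for a perfect ideal the cohomology of the semidirect product is the $L_D$-kernel, which makes the appearance of condition (L2) conceptually transparent. What the paper's route buys: it uses Theorem~\ref{Ijkcykel} only once, on the given cocycle, and verifies invariance by an elementary, self-contained computation exhibiting the concrete test forms $f\,\dd g\,\kappa(X,X)$ that detect it. Both arguments rest on the same two pillars (Theorem~\ref{Ijkcykel} and perfectness), so the difference is one of organisation rather than of substance, but your version is a clean alternative and, if anything, slightly shorter.
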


\begin{proof} 
Let $\omega$ be a continuous $2$-cocycle on $\fg \rtimes_{D}\R$. If 
$i \colon \fg \hookrightarrow \fg \rtimes_{D} \R$ is the inclusion, 
then $i^*\omega$ is a 2-cocycle on $\fg$.
By Theorem \ref{Ijkcykel} there exists a Lie connection $\nabla$ 
and a continuous linear functional $\phi \in \g'$ such that 
\[ i^*\omega(\xi,\eta) = \lambda(\kappa(\xi,\nabla\eta)) + \phi([\xi,\eta]), 
\quad \mbox{  where } \quad \lambda \in \Omega_{c}^1(M,\bV)'.\] 
Using the cocycle property (cf.\ Rk.~\ref{Rk:perfectcocycles}), we find
\begin{eqnarray}\label{DinG}
\omega (D, [\xi,\eta]) = 
 i^*\omega (L_{\bv}\xi , \eta) +  
 i^*\omega (\xi , L_{\bv}\eta )\,
\end{eqnarray}
and hence, using (\ref{fijneformule2})
and (\ref{fijneformule3}), 
\begin{eqnarray*}
\omega (D, [\xi,\eta]) 
 &=&
 \lambda\big(\kappa(L_{\bv}\xi, \nabla \eta) + \kappa(\xi,\nabla L_{\bv}\eta)\big)
 + \phi(L_{\bv}[\xi,\eta])\\
 &=&
 \lambda(L_{\pi_*\bv} \kappa(\xi,\nabla\eta)) + \lambda(\kappa(L_{\bv}\nabla, [\xi,\eta])) + \phi(L_{\bv}[\xi,\eta])\,.
\end{eqnarray*}
In particular, $[\xi,\eta] = 0$ implies 
$\lambda(L_{\pi_*\bv} \kappa(\xi,\nabla\eta)) = 0$.

Now fix a trivialisation $\cK|_{U} \simeq U \times K$ over an open subset $U\subseteq M$.
It induces the corresponding trivialisation $\bV|_{U} \simeq U \times V(\fk)$ of flat bundles.
For $f,g \in C^{\infty}_{c}(U)$ and $X\in \fk$, we 
consider $\xi = fX$ and $\eta = gX$ as commuting elements of $\Gamma_{c}(\fK)$.
With the local connection $1$-form $A \in \Omega^1(U,\fk)$, we then have
\[\kappa(\xi,\nabla \eta) = \kappa(fX,\dd g \cdot X + g[A,X]) =  f\, \dd 
g \cdot \kappa(X,X).\]  
Since $[\xi,\eta] = 0$, we find 
$\lambda\big((L_{\pi_*\bv} \beta \kappa(X,X)\big) = 0$ for all 
$1$-forms $\beta = f\dd g$ with $f,g \in C^\infty_c(U)$. 
As this holds for all $X\in \fk$ and as 
$\kappa(\fk,\fk) = V(\fk)$, we find
$\lambda(L_{\pi_*\bv}\Omega_{c}^1(U,\bV)) = \{0\}$ by polarisation.
Since 
$\Omega^1_{c}(M,\bV) = \sum_{i\in I} \Omega^1_{c}(U_i,\bV)$
for any trivialising open 
cover $(U_i)_{i\in I}$ of $M$, 
we find $\lambda(L_{\pi_*\bv}\Omega^1_{c}(M,\bV)) = \{0\}$.

Having established that $\lambda$ is $\pi_*\bv$-invariant, 
we may construct $\omega_{\lambda,\!\nabla}$ according to 
(\ref{cdef1}) and (\ref{cdef2}). 
It then follows from the above that the difference 
\[ \Delta \omega := \omega - \omega_{\lambda,\alpha} + \delta \phi^0,\] 
where $\phi^0$ is an extension of $\phi$ to $\g \rtimes_D \R$, 
satisfies $i^*\Delta\omega = 0$. 
Applying (\ref{DinG}) to $\Delta\omega$, we see that
$\Delta\omega(D,[\fg,\fg]) =0$ and hence that $\Delta\omega = 0$
because $\fg$ is perfect by \cite[Prop.~2.4]{JW13}.

This shows surjectivity of the map $\lambda \mapsto [\omega_{\lambda,\!\nabla}]$.
Injectivity follows because $\omega_{\lambda,\!\nabla} = \delta\chi$ implies
$\omega_{\lambda,\!\nabla}|_{\fg \times \fg} = \delta(\chi|_{\fg})$, hence
$\lambda = 0$ by Theorem \ref{Ijkcykel}.
\end{proof}

\begin{Remark}\label{Rk:curvatureisnice}
If the Lie connection $\nabla$ on $\fK$ can be chosen so as to make $\bv \in \mathfrak{X}(\cK)$ 
horizontal, $\nabla_{\pi_{*}\bv} \xi = L_{\bv}\xi$ for all $\xi \in \Gamma(\fK)$,
then equation \eqref{eq:defcurv} shows that
$L_{\bv}\nabla = i_{\pi_{*}\bv} R$, where $R$ is the curvature of $\nabla$.
%
For such connections, (\ref{cdef2}) is equivalent to 
\begin{equation}
  \label{eq:curv}
\omega_{\lambda,\!\nabla}(D, \xi ) = \lambda(\kappa(i_{\pi_*\bv}R,\xi)).
\end{equation}
\end{Remark}

%
%
%

%
%
%

\bibliographystyle{alpha}

\end{document}